\newtheorem{theorem}{Theorem}
\newtheorem{proposition}[theorem]{Proposition}
\newcommand{\F}{\mathbb{F}}
\newcommand{\Z}{\mathbb{Z}}
\title{On hypercube statistics}
	\date{\vspace{-5ex}}
	\author{ Noga Alon 
\thanks{Princeton University, USA; email:\texttt{ nalon@math.princeton.edu}}
			~~Maria Axenovich
		\thanks{Karlsruhe Institute of Technology, Germany;
			email:
			\mbox{\texttt{ maria.aksenovich@kit.edu}}}				
~~John Goldwasser			
\thanks{West Virginia University, USA;  email:\mbox{\texttt{ jgoldwas@math.wvu.edu}}}
}
\begin{document}	
			
\maketitle
	
\abstract{Let  $d \geq 1$ and $s \leq 2^d$ be nonnegative integers. 
For a subset $A$ of vertices of the  hypercube $Q_n$ and $n\geq d$, let 
$\lambda(n,d,s,A)$ denote the fraction of subcubes $Q_d$ of
$Q_n$ that contain exactly $s$ vertices of $A$. Let $\lambda(n,d,s)$
denote the maximum possible value of $\lambda(n,d,s,A)$ as
$A$ ranges over all subsets of vertices of $Q_n$, and let
$\lambda(d,s)$ denote the limit of this quantity as $n$ tends
to infinity.   
We prove several lower and upper bounds on $\lambda(d,s)$, showing that
for all admissible values of $d$ and $s$ it is larger than $0.28$.
We also show that the
values of $s=s(d)$ such that $\lambda(d,s)=1$ are exactly
$\{0,2^{d-1},2^d\}$.
In addition we  prove  that  if $0<s< d/8$, then 
$\lambda(d, s) \leq 1 -   \Omega(1/s)$, and that if $s$ is divisible by a power
of $2$ which is $\Omega(s)$ then $\lambda(d,s) \geq 1-O(1/s)$.
We suspect that $\lambda(d,1)=(1+o(1))/e$ where the $o(1)$-term tends to
$0$ as $d$ tends to infinity, but this remains open, as does the problem of
obtaining tight bounds for essentially all other quantities $\lambda(d,s)$.
}	
	
\section{Introduction}	

Let $Q_n$ be the hypercube of dimension $n$ whose vertices are identified with $n$-component  binary vectors.
For a subset $A$ of vertices of $Q_n$ and $d\leq n$, let 
$\lambda(n,d,s,A)$ denote the fraction of subcubes $Q_d$ of
$Q_n$ that contain exactly $s$ vertices of $A$. Let $\lambda(n,d,s)$
denote the maximum possible value of $\lambda(n,d,s,A)$ as
$A$ ranges over all subsets of vertices of $Q_n$, and let
$\lambda(d,s)$ denote the limit of this quantity as $n$ tends
to infinity. It is easy to see that 
the limit exists, and is the infimum over $n$ of
$\lambda(n,d,s)$ as for any fixed $d,s$ the function
$\lambda(n,d,s)$ is monotone non-increasing in $n$.\\

The problem of determining or estimating the quantities
$\lambda(n,d,s)$ and $\lambda(d,s)$ is motivated by
the questions and  results of Goldwasser and Hansen 
on counting structural configurations in hypercubes \cite{GH},
as well as by the results on edge-statistics in graphs by 
Alon, Hefetz, Krivelevich, and Tyomkyn \cite{AHKT},    
Kwan,  Sudakov, and  Tran \cite{KST},  
Martinsson, Mousset, Noever, and 
Trujic \cite{MMNT}, and Fox and Sauermann \cite{FS}.\\
	
Clearly $\lambda(d,s)=\lambda(d,2^d-s)$ and $\lambda(d,0)=1$. In addition, if $s=2^{d-1}$, then we see that $\lambda(d,s)=1$ by taking all vertices of the hypercube with even number of ones. 
To state our results,  define the generalized Johnson's 
Graph $J(4s, 2s, s)$  whose vertex set is the set of $2s$-element 
subsets of a  $4s$-element set, in which two vertices are 
adjacent if  and only if the corresponding sets intersect in exactly 
$s$ elements. Let $\omega(s)= \omega(J(4s, 2s, s))$ denote the 
clique number of $J(4s, 2s, s)$. It is easy and known that 
$\omega(s) \leq 4s-1$ with equality if and only if a Hadamard 
matrix of order $4s$ exists, see for example Godsil and Royle \cite{GR}.
We first state our upper bounds on $\lambda(d,s)$. We use the  
notation $t(n,k)$ for the number of edges in the Tur\'an graph $T(n,k)$, 
that is, the complete $k$-partite $n$-vertex graph with parts that are 
as equal as possible.
Denote the density $t(n,k)/\binom{n}{2}$ by $\pi(n,k)$.
	
\begin{theorem}
	\label{ub}
Let $s$ and $d$ be integers. Then  $\lambda(d,s)=1$ 
	if and only if $s\in \{0, 2^d, 2^{d-1}\}$.  If $1< s <2^{d-1}$, 
then 
$$\lambda(d,s) \leq \lambda(d+2, d,s) = \pi(d+2, \omega(s)) \leq 
\left(1 - \frac{1}{4s-1}\right)\left(1+ \frac{1}{d+1}\right).$$
In particular, $\lambda(d+2, d,s)=1$ iff $d+2\leq \omega(s)$.  
	When $s=1$, we have   $\lambda(d,1) \leq \lambda(d+2, d, 1) 
	= \pi(d+2, 3) $ for $d< 6$, and $\lambda(d+2, d,1)= 3/4$ otherwise.
\end{theorem}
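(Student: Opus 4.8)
The plan is to prove the theorem in three stages corresponding to its three assertions: (1) the characterization of when $\lambda(d,s)=1$; (2) the chain of inequalities $\lambda(d,s)\le\lambda(d+2,d,s)=\pi(d+2,\omega(s))$ together with the explicit bound; and (3) the special case $s=1$. The conceptual heart is to translate the question ``how many subcubes $Q_d$ of $Q_n$ contain exactly $s$ points of $A$'' into a graph-theoretic extremal problem about the structure induced on a fixed small set of coordinates, so that the Turán graph appears naturally.

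**The key translation.** I would first argue that $\lambda(d+2,d,s)$ is computed by a local structure. Fix the ambient cube $Q_{d+2}$; a $d$-dimensional subcube is obtained by fixing the values of $2$ of the $d+2$ coordinates, so there are $\binom{d+2}{2}\cdot 4$ subcubes, and for a pair of coordinates $\{i,j\}$ the four subcubes partition $Q_{d+2}$ according to the value on $\{i,j\}$. For $A\subseteq V(Q_{d+2})$ let $a_{00},a_{01},a_{10},a_{11}$ be the sizes of the four classes (so they sum to $|A|$); the subcube $Q_d$ obtained by fixing coordinates $i,j$ to a value $\epsilon$ contains exactly $s$ points of $A$ iff $a_\epsilon=s$. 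Thus, for a fixed pair $\{i,j\}$, the number of ``good'' subcubes is the number of $\epsilon\in\{0,1\}^2$ with $a_\epsilon=s$. The crucial combinatorial observation — this is where the Johnson-graph clique number enters — is that among the four quantities $a_\epsilon$ summing to a fixed total, at most $\omega(s)$ pairs of coordinate-pairs can simultaneously be ``good,'' because a family of coordinate-pairs all of whose induced classes equal $s$ corresponds to a family of balanced $\pm1$ vectors, i.e.\ (after identifying $A$ with a $\{0,1\}$-indicator on $4s$ relevant points when $|A|=2s$) to a clique in $J(4s,2s,s)$. Making this correspondence precise — and checking that the extremal configuration for the fraction of good subcubes is exactly: take a maximum clique of size $\omega(s)$ in the Johnson graph, realize it as $\omega(s)$ coordinates each of which splits $A$ evenly, and distribute the remaining $d+2-\omega(s)$ coordinates to make the ``conflict graph'' on coordinate-pairs a Turán graph $T(d+2,\omega(s))$ — is the main obstacle. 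I expect the right statement is that the pairs $\{i,j\}$ yielding a good subcube are exactly the non-edges of some graph whose complement is $\omega(s)$-colorable, so the count of good pairs is maximized by the Turán graph, giving the density $\pi(d+2,\omega(s))$, and each good pair contributes a constant number of good subcubes so the fraction among all subcubes is the same $\pi(d+2,\omega(s))$.

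**From $Q_{d+2}$ to general $Q_n$.** Once $\lambda(d+2,d,s)=\pi(d+2,\omega(s))$ is established, the inequality $\lambda(d,s)\le\lambda(d+2,d,s)$ follows from monotonicity of $\lambda(n,d,s)$ in $n$ (stated in the introduction: $\lambda(d,s)=\inf_n\lambda(n,d,s)\le\lambda(d+2,d,s)$). For the characterization $\lambda(d,s)=1\iff s\in\{0,2^d,2^{d-1}\}$: the ``if'' direction is already noted in the excerpt ($s=0,2^d$ trivial; $s=2^{d-1}$ via the even-weight set). For ``only if,'' suppose $1\le s\le 2^{d-1}$, $s\ne 2^{d-1}$; then $s=1$ or $1<s<2^{d-1}$, and in either case the displayed upper bound (or the $s=1$ bound) gives $\lambda(d+2,d,s)<1$ once $d+2>\omega(s)$, hence $\lambda(d,s)\le\lambda(d+2,d,s)<1$; the remaining small cases where $d+2\le\omega(s)$ need a separate check using that $\omega(s)\le 4s-1<2^d+1$ forces, after a short computation, that $d$ is small and $s$ correspondingly constrained, and one verifies directly (again via monotonicity, looking at $\lambda(n,d,s)$ for one larger $n$) that the fraction drops below $1$.

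**The numeric bound and the case $s=1$.** The inequality $\pi(d+2,\omega(s))\le\bigl(1-\tfrac1{4s-1}\bigr)\bigl(1+\tfrac1{d+1}\bigr)$ is a direct estimate: $\pi(n,k)=1-\tfrac{1}{k}+O(1/n)$ more precisely $t(n,k)/\binom n2\le (1-1/k)\cdot\frac{n}{n-1}$ when $k\mid n$ and is no larger otherwise, so with $n=d+2$ and $k=\omega(s)\le 4s-1$ one gets $\pi(d+2,\omega(s))\le(1-\tfrac1{4s-1})\cdot\tfrac{d+2}{d+1}$, which is the claimed bound; I would record the exact formula $\pi(n,k)=1-\frac1k+\frac{r(k-r)}{k\binom n2}$ with $n\equiv r\pmod k$ and bound $r(k-r)\le \lfloor k^2/4\rfloor$, or simply note $t(n,k)\le(1-1/k)n^2/2$ giving $\pi(n,k)\le(1-1/k)\frac{n}{n-1}$. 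Finally, for $s=1$ the Johnson graph $J(4,2,1)$ has clique number $\omega(1)=3$ (three balanced vectors of length $4$, e.g.\ the rows of any $3\times4$ piece of a Hadamard-type matrix; equivalently $1=s$ and $4s-1=3$), so $\lambda(d+2,d,1)=\pi(d+2,3)$, and for $d+2\ge 6$ the Turán graph $T(d+2,3)$ has density exactly... no, $\pi(d+2,3)=t(d+2,3)/\binom{d+2}{2}$ which for $d+2$ divisible by $3$ equals $\frac{(d+2)^2/3}{(d+2)(d+1)/2}=\frac{2(d+2)}{3(d+1)}$, and in general one checks this is $3/4$ precisely when... here I would instead re-derive the claimed value $\lambda(d+2,d,1)=3/4$ for $d\ge 6$ directly from the structure, noting that with $s=1$ one can in fact do better than the generic Turán bound because a singleton behaves specially, and pin down $\pi(d+2,3)$ versus $3/4$ by the small-$d$ computation for $d<6$; the precise extremal set achieving $3/4$ for large $d$ (and why no set beats it) is the last thing to verify. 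The single biggest obstacle throughout is rigorously establishing the correspondence between families of good coordinate-pairs and cliques in $J(4s,2s,s)$, i.e.\ that the ``conflict graph'' on coordinate-pairs is co-$\omega(s)$-colorable and that this bound is tight.
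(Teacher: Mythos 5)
There are genuine gaps, and the most serious one is in the part you treat most casually. For the ``only if'' direction of $\lambda(d,s)=1\Rightarrow s\in\{0,2^{d-1},2^d\}$ you propose to use $\lambda(d,s)\le\lambda(d+2,d,s)<1$ and dismiss the regime $d+2\le\omega(s)$ as ``small cases'' fixable by ``a short computation'' or by looking at one larger $n$. This regime is not small: it contains essentially all $s$ with $(d+3)/4\le s<2^{d-1}$ for which $\omega(s)$ is large (for instance every power of two $s\le 2^{d-2}$, where a Hadamard matrix of order $4s$ exists and $\omega(s)=4s-1\gg d+2$), and for all such $s$ the theorem itself asserts $\lambda(d+2,d,s)=1$, so no analysis at $n=d+2$ (nor, without a genuinely new idea, at any fixed slightly larger $n$) can yield a contradiction; recall $\lambda(d,s)$ is an infimum over $n$ and one must let $n$ grow. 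The paper's argument here is of a completely different nature: assuming $\lambda(d,s)=1$, it takes $n\gg p$ for a prime $p>2^d$, uses a layered Ramsey lemma to find a copy of $Q_p$ in which $A$ is a union of layers, deduces $s2^{p-d}=\sum_{i\in T}\binom{p}{i}$, and then analyzes this identity modulo $p$ (divisibility of $\binom{p}{i}$ for $0<i<p$, plus Fermat's little theorem) to force $s\in\{0,2^{d-1},2^d\}$. Your proposal contains no substitute for this Ramsey-plus-number-theory step.

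For the quantitative part your setup (fix two coordinates, the four classes $a_{00},\dots,a_{11}$, good pairs related to edges of $J(4s,2s,s)$, Turán graph extremality) is the same as the paper's, but the piece you flag as ``the main obstacle'' is exactly what the paper has to do and what your sketch omits: a pair of coordinates yields all four subcubes good iff all four classes equal $s$, which forces $|A|=4s$; this splits the argument into the case $|A|\ne 4s$, where every pair contributes at most $3$ of its $4$ subcubes (hence fraction at most $3/4$), and the case $|A|=4s$, where pairs of balanced columns contribute $4$ iff their zero-sets meet in exactly $s$ rows (an edge of $J(4s,2s,s)$), contribute $0$ otherwise, and pairs involving an unbalanced column contribute at most $2$; the weight-$4$ pairs then form a $K_{\omega(s)+1}$-free graph on the columns, so Tur\'an's theorem bounds their number by $t(d+2,\omega(s))$, and a short weight comparison shows the total is at most $4t(d+2,\omega(s))$, with a matching construction giving equality $\lambda(d+2,d,s)=\pi(d+2,\omega(s))$ when this exceeds $3/4$ (which holds for all $s\ge2$ since $\omega(s)\ge4$). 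This same case split is what resolves $s=1$, where your write-up visibly trails off: there $\omega(1)=3$, the maximum of $\{3/4,\pi(d+2,3)\}$ is $\pi(d+2,3)$ only for $d<6$, and for $d\ge6$ the value $3/4$ is attained by the $|A|\ne 4$ configuration consisting of the all-zero vertex together with the layer of weight $d+1$; without the $|A|\ne 4s$ analysis you have neither the upper bound $3/4$ nor the matching construction. Your derivation of $\pi(d+2,\omega(s))\le\bigl(1-\frac{1}{4s-1}\bigr)\bigl(1+\frac{1}{d+1}\bigr)$ and the monotonicity step $\lambda(d,s)\le\lambda(d+2,d,s)$ are fine.
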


Note that the general upper bound implies in particular that if  $s$ is not large, say, $0<s< d/8$, then 
$\lambda(d, s) \leq 1 -   \Omega(1/s)$.

Next we provide the lower bounds that are described probabilistically. 
Let $c_d$ denote the probability that a random $d$ by $d$ binary
matrix whose rows are random independent non-zero vectors of $\F_2^d$ is 
nonsingular (in $\F_2$). It is easy and well known that 
$c_d=\prod_{i=1}^{d-1} (1-\frac{2^i-1}{2^d-1})$, which is roughly 
$0.289$ for large $d$.\\

For $1 \leq k \leq d$, let $c(d,k)$  denote the probability that
a random $(d-k)$ by $d$ binary matrix whose columns are uniform
random vectors in $\F_2^{d-k}$ is of rank $d-k$ (over $\F_2$). 
It is a bit better to take here too only nonzero random column vectors,
but to simplify the computation we consider this slightly
suboptimal version. By choosing the rows (not the columns) 
of the matrix one by one ensuring that each row does not lie
in the span of the previous ones it is easy to see that
$$
c(d,k)=\prod_{i=0}^{d-k-1} \left(1-\frac{2^i}{2^d}\right).
$$ 
Note that this quantity is larger than $1-\frac{1}{2^k}$. 

The first simple lower bound in the theorem below 
appears in the recent paper Goldwasser and Hansen \cite{GH}, 
we include the proof here for completeness. Note that this 
lower bound approaches $e^{-1}\approx 0.37$ as $d$ tends to infinity. 
	
\begin{theorem}\label{lb}
For any integer $d\geq 2$, $ \lambda(d,1) \geq \left( 1- 2^{-d} \right)^{2^d -1}$.
For all admissible $d$ and $s$,
$\lambda(d,s) \geq c_d$. Moreover,  for every $s$ of the form
$s=2^k \cdot j$, where $j$ is an odd integer,  which satisfies $0<s \leq 2^{d-1}$,
$\lambda(d,s) \geq c(d,k)$. In particular, for any $s$ which
is a power of $2$, $\lambda(d,s) \geq 1-\frac{1}{s}$.
\end{theorem}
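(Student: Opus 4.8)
The plan is to prove the four assertions by two kinds of constructions. Throughout, recall from the introduction that $\lambda(n,d,s)$ is non-increasing in $n$, so it suffices to produce, for each $n\ge d$, a vertex set $A$ of $Q_n$ with $\lambda(n,d,s,A)$ at least the claimed bound; the bound on $\lambda(d,s)$ then follows by letting $n\to\infty$. For the first inequality, $\lambda(d,1)\ge(1-2^{-d})^{2^d-1}$, I would take $A$ to be a random subset of $V(Q_n)$ in which each vertex is included independently with probability $2^{-d}$. A fixed $d$-subcube has $2^d$ vertices, so it contains exactly one vertex of $A$ with probability $2^d\cdot 2^{-d}\cdot(1-2^{-d})^{2^d-1}=(1-2^{-d})^{2^d-1}$; by linearity of expectation the expected fraction of $d$-subcubes with this property equals $(1-2^{-d})^{2^d-1}$, so some $A$ attains at least this value.

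For the remaining bounds I would use preimages of fixed target sets under random $\F_2$-linear maps. Write a $d$-subcube of $Q_n$ as the set of vertices agreeing with a fixed $0/1$-string off a set $\bar S\subseteq[n]$ of $d$ ``free'' coordinates. If $\phi\colon\F_2^n\to\F_2^m$ is linear with columns $v_1,\dots,v_n$, then on such a subcube $\phi$ is affine in the free coordinates, with linear part the $m\times d$ matrix $\psi_{\bar S}$ whose columns are $(v_i)_{i\in\bar S}$; consequently, if $\psi_{\bar S}$ is surjective then each of its fibres has size $2^{d-m}$, so for \emph{any} target set $T\subseteq\F_2^m$ the subcube contains exactly $2^{d-m}\lvert T\rvert$ vertices of $\phi^{-1}(T)$, independently of the fixed $0/1$-string. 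Hence, setting $A=\phi^{-1}(T)$, we get $\lambda(n,d,s,A)\ge\Pr_{\bar S}[\psi_{\bar S}\text{ surjective}]$ whenever $m$ and $T$ are chosen with $2^{d-m}\lvert T\rvert=s$.

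To prove $\lambda(d,s)\ge c_d$ for all admissible $d,s$, apply this with $m=d$, a set $T\subseteq\F_2^d$ of size $s$, and $\phi$ whose columns are chosen independently and uniformly among the nonzero vectors of $\F_2^d$. Here $\psi_{\bar S}$ surjective means invertible, so $\lambda(n,d,s,A)\ge\Pr_{\bar S}[\psi_{\bar S}\text{ invertible}]$, and averaging over $\phi$ gives $\mathbb E_\phi[\lambda(n,d,s,A)]\ge c_d$, because for each fixed $\bar S$ the matrix $\psi_{\bar S}$ is a $d\times d$ matrix with i.i.d.\ uniform nonzero columns, hence invertible with probability exactly $c_d$. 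For $\lambda(d,s)\ge c(d,k)$ with $s=2^kj$, $j$ odd, $0<s\le 2^{d-1}$: since $j\ge1$ we have $2^k\le s\le 2^{d-1}$, so $k\le d-1$ and $j=s/2^k\le 2^{d-1-k}\le 2^{d-k}$, whence there is $T'\subseteq\F_2^{d-k}$ with $\lvert T'\rvert=j$. Apply the construction with $m=d-k$, target $T'$, and $\phi$ whose columns are i.i.d.\ uniform in $\F_2^{d-k}$ (the mildly suboptimal choice matching the definition of $c(d,k)$); then $2^{d-m}\lvert T'\rvert=2^kj=s$, so $\lambda(n,d,s,A)\ge\Pr_{\bar S}[\mathrm{rank}\,\psi_{\bar S}=d-k]$, and averaging over $\phi$ yields $\mathbb E_\phi[\lambda(n,d,s,A)]\ge c(d,k)$, since for fixed $\bar S$ the matrix $\psi_{\bar S}$ is a $(d-k)\times d$ matrix with i.i.d.\ uniform columns. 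Finally, if $s$ is a power of $2$: for $s=2^k\le 2^{d-1}$ the previous case with $j=1$ gives $\lambda(d,s)\ge c(d,k)>1-2^{-k}=1-1/s$ using the estimate on $c(d,k)$ from the introduction, and for $s=2^d$ we have $\lambda(d,s)=1\ge 1-1/s$.

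The individual steps are routine; the only point that needs care is the interface between the averaging and the limit. One must verify that surjectivity of $\psi_{\bar S}$ forces the exact count $s$ \emph{uniformly} over the fixed coordinates — this is what legitimizes bounding the fraction of good $d$-subcubes below by a probability in $\bar S$ alone — and that, for each fixed $\bar S$, the submatrix $\psi_{\bar S}$ is genuinely distributed as a matrix with i.i.d.\ columns, so that its success probability is \emph{exactly} $c_d$ (respectively $c(d,k)$), not merely asymptotically so. Granting this, exchanging $\mathbb E_\phi$ with $\mathbb E_{\bar S}$, choosing a best $\phi$, and letting $n\to\infty$ completes each case.
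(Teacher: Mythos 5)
Your proposal is correct and matches the paper's argument: the first bound is the same random sparse set with inclusion probability $2^{-d}$, and the remaining bounds use exactly the paper's construction, where your preimage $\phi^{-1}(T)$ under a random linear map with uniform (nonzero, resp.\ arbitrary) columns is the paper's syndrome coloring $x\mapsto Bx$ with $A$ the union of $s$ (resp.\ $j$) color classes, and the surjectivity-of-$\psi_{\bar S}$ condition plus averaging over the matrix is the same exchange of expectations. The only cosmetic difference is the linear-map phrasing and your explicit check that $j\le 2^{d-k}$, which the paper leaves implicit.
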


\noindent	
{\bf Remark.}  Here we summarise the best bounds on  $\lambda(d,1)$  when $d = 2,3,$ or $4$.  
We have that $\lambda(2,1) \geq c_2=2/3$ from Theorem \ref{lb}.
Observe that $\lambda(d, 1) \geq 2/(d+1)$ by the following construction. The Hamming weight of a binary vector is its number of $1$'s.    For a fixed $d$, let $A$ be the set of all vertices in $Q_n$ with Hamming weight divisible by $d+1$.  A copy of $Q_d$-cube contains precisely one vertex in $A$ if and only if the smallest Hamming weight of any of its vertices is congruent to $0$ or $1 \pmod {d+1}$.
 Together with upper bounds established by Baber \cite{Ba}  using the Flag Algebra method, we have the following estimates for $d=2,3$, and $4$:
$2/3 \leq \lambda(2,1)\leq 0.68572$, 
$0.5 \leq \lambda(3,1)\leq 0.61005$, and 
$0.4 \leq \lambda(4,1)\leq 0.60254$. 	\\

The proofs of the main results are given in Section 
\ref{sec:main-proofs}. Section \ref{sec:number} contains some simple
number theoretic consequences.  In Section \ref{sec:approx} 
we consider an approximate version of the problem.
The final Section \ref{sec:conclusion} contains some 
concluding remarks and open problems.  

Throughout this note we call each of the $2^{n-d} {n \choose d}$
$d$-dimensional subcubes of $Q_n$ a
{\it $d$-cube} or a {\it copy of} $Q_d$. 
The $k-$th {\it layer} in the hypercube is the set of all vertices of 
Hamming weight $k$. 

\section{Proofs of the main results}\label{sec:main-proofs}

\begin{proof}[Proof of Theorem \ref{ub}]

	As already mentioned it is clear that $\lambda(d,s)=1$
	for $s \in \{0,2^{d-1},2^d\}$.
We first show the converse: if 
$\lambda(d,s)=1$ then $s\in \{0, 2^d, 2^{d-1}\}$.\\

Consider  a prime $p>2^d$, suppose $\lambda(d,s)=1$ and take a huge $n\gg p$ and
a subset $A$ of vertices of $Q_n$ so that every copy of $Q_d$ contains 
	exactly $s$ vertices
of $A$.
By a simple iterated application of the hypergraph Ramsey theorem, 
	there is a copy  $Q$  of $Q_p$  
in which every layer is either fully contained in $A$ or contains 
	no vertices of $A$, see for  example the Layered Lemma in \cite{AW}.
We have that  $Q$ has exactly $2^{p-d} s$ vertices of $A$ (as it consists
of $2^{p-d}$ pairwise disjoint copies of $Q_d$). This implies
that there is a subset $T$ of $\{0,1,2,\ldots, p\}$ so that
\begin{equation}\label{star}
s 2^{p-d}=\sum_{i \in T} \binom{p}{i}.
\end{equation}
If $T$ is empty then $s=0$, so assume $T$ is nonempty.  
	Consider three possible cases based on whether $0$ and/or 
	$p$ are in $T$.
 If neither $0$ nor $p$ are in $T$,  then the right hand side 
	of (\ref{star}) is divisible by $p$,
     which is impossible as the left hand side is not.
If both $0$ and $p$ are in $T$, then the right hand side 
	of (\ref{star})  is $2 \pmod p$. By Fermat's
little Theorem in this case 
	$s 2^{p-d}=2 \pmod p=2^p \pmod p$,  
	so $s = 2^d \pmod p$ and as $p>2^d$ and $p>s$ this gives
$s=2^d$.
Finally, if exactly one of $\{0,p\}$ is in $T$, then the right 
	hand side of (\ref{star}) is $1 \pmod p$, so in this  case
$s 2^{p-d}=1 \pmod p = 2^{p-1} \pmod p$ and thus $s= 2^{d-1} \pmod p$ 
	so $s=2^{d-1}$,
completing the proof of the first part of the theorem.\\~\\

For general upper bounds, we first consider  $\lambda(d+2, d, s)$.  
	The result will then follow by averaging.
Note that each $d$-cube in $Q_{d+2}$ can be uniquely described  
by the binary vectors that 
have prescribed values in some two positions, called {\it fixed positions} and running through all possible $2^d$ binary vectors on the remaining {\it variable positions}. 
	The total number of copies of  $Q_d$ in $Q_{d+2}$ is $4\binom{d+2}{2}$. \\

 Consider a set $A$ of vertices in $Q_{d+2}$ and let $M=M_A$ be an $|A|\times (d+2)$ binary matrix whose rows are the elements of $A$.   
	We shall call a copy of  $Q_d$ {\it good } if it contains exactly $s$ vertices from $A$, and call it {\it bad} otherwise.
 For each  pair $i,j$, $1\leq i<j\leq  d+2$ let  $M(i,j)$ be the $|A|\times 2$ sub-matrix of $M$ whole columns are  columns $i$ and $j$ of $M$.   A copy $Q$ of $Q_d$  with fixed positions $i$ and $j$ is good if and only if  there are exactly $s$ rows of $M(i,j)$ that match the values in the two  fixed positions of $Q$.\\

Thus $M(i,j)$ contributes to four (out of possible four)  good $Q_d$'s  with fixed positions $i,j$ if and only if 
$M(i,j)$ has exactly $s$ rows equal to each possible binary vector of length two. In particular, if $M$ contributes to four good $Q_d$'s, then $M$ has $4s$ rows and $|A|=4s$.
Otherwise $M(i,j)$ contributes to at most $3$ good $Q_d$'s.\\

Case 1.  $|A|\neq 4s$.  \\
	By the previous paragraph in this case 
	$\lambda(d+2, d, s, A) \leq 3/4$.\\

Case 2.  $|A|=4s$.  \\
If $M$ has a column $i$ with number of zeros not equal to $2s$, then $M(i,j)$  contributes at most $2$ good $Q_d$'s, for any $j\neq i$.  Let columns $i$ and $j$  have exactly $2s$ zeros each. Then $M(i,j)$ contributes $4$ good $Q_d$'s if and only if  these columns have exactly $s$ positions in which  both of them are zero, i.e., correspond to an edge of the 
	generalized Johnson's graph $J(4s, 2s, s)$.  Otherwise $M(i,j)$  contributes  no good $Q_d$'s.\\

Consider a complete  edge-weighted graph $G$ with vertex set $[d+2]$, whose vertices correspond to columns of $M$ and edges get a weight corresponding to the number of good $Q_d$'s contributed by the respective pairs of columns.
Thus $\lambda(d+2, d, s, A)$ is at most the total weight of $G$ divided by $4\binom{d+2}{2}$.\\

Assume that there are $k$ columns  of $M$  with exactly $2s$ zeros each.
Without loss of generality these are the first $k$ columns.  We see that the edges  of weight $4$ in $G$ correspond to a blow-up of a subgraph of  $J(4s, 2s, s)$. 
Since $\omega(J(4s,2s,s)=\omega(s)$, the total weight of edges 
contributed by the first $k$ vertices of $G$ is at most 
$4t(k, \omega(s))$.  All edges incident to $[d+2] \setminus [k]$ have weight at most $2$. Since the density of any non-trivial Tur\'an graph is at least $1/2$, the average weight of an edge induced by $[k]$ is at least $2$. Thus increasing $k$ does not decrease the total weight of $G$, that stays at most $4t(d+2, \omega(s))$. Note that this value is attained by a 
matrix with  $d+2$ columns having $2s$ zeros each and  
corresponding to the vertices of a  clique in $J(4s, 2s,s)$, each 
repeated an almost equal number of times.\\

Combining Case 1 and Case 2, we have that 
$$\pi(d+2, \omega(s))\leq \lambda(d+2, d, s) \leq \max \{3/4,  \pi(d+2, \omega(s)\}.$$

Clearly $\omega(1)=3$ and $\omega(s)>3$ for $s\geq 2$. Thus in particular $\lambda(d+2, d,1) \leq 3/4$, for $d>6$.
Note that $\pi(d+2, \omega(s))>3/4$ if $s>1$ or if   ($s=1$ and $d< 6$).  
Moreover, when $A$ is the set of all vertices in  $Q_{d+2}$ of 
Hamming weight $d+1$ or $0$, we have  $\lambda(d+2, d, 1, A) = 3/4$. 
Thus $\lambda(d+2, d, 1) = 3/4$ for $d\geq 6$.
This concludes the proof.
\end{proof}

\vskip 0.5cm
	
\begin{proof}[Proof of Theorem \ref{lb}]
	In order to lower bound $\lambda(n,d,1)$, consider a random set 
	$A$ of vertices in $Q_n$ obtained by choosing each
	vertex randomly and independently  with probability $2^{-d}$. 
	The probability that a copy of $Q_d$ contains exactly one vertex 
	of $A$ is $2^d\cdot 2^{-d} \cdot (1- 2^{-d})^{2^d-1}$, 
	and the desired result follows by linearity of expectation.\\

	For proving a lower bound for $\lambda(n,d,s)$, let 
$B$ be a random $d$ by $n$ binary matrix whose columns are 
independent uniformly chosen random nonzero vectors in $\F_2^d$. Define a
coloring of the vectors in $\F_2^n$ viewed as the vertices of $Q_n$
by coloring each vector by its syndrome $Bx \in \F_2^d$. If a set
$S$ of $d$ columns of $B$ 
	forms a basis, then the $2^d$ vertices of each of the
$2^{n-d}$ copies of $Q_d$  with any fixed values outside the
	columns of $S$
get all the $2^d$ possible colors.
Therefore, for every fixed choice of $s$ of these colors,
the set $A$ of all vertices with these colors has exactly
$s$ vertices of each of the subcubes corresponding to such nonsingular
sets of columns $S$. The expected fraction of such sets $S$ is
$c_d$ of all ${n \choose d}$ $d$-tuples of columns, and therefore
there exists a choice of $B$ for which there are at least that many
sets $S$. This completes the proof of the second lower bound.\\

Note that the subcubes that
have exactly $s$ vertices of the set $A$ above 
	are determined by the sets of 
their free coordinates, and not by the values of the fixed coordinates.
This is a property that, while not needed here,
may be helpful for some further applications.\\

The proof  of the last part  is very similar to the one above.
Let $B$ be a random $d-k$ by $n$ binary matrix whose columns are 
independent uniformly chosen random vectors in $\F_2^{d-k}$. Define a
coloring of the vectors in $\F_2^n$ viewed as the vertices of $Q_n$
by coloring each vector by its syndrome $Bx \in \F_2^{d-k}$. If a set
$S$ of $d$ columns of $B$ spans $\F_2^{d-k}$, then the 
	$2^d$ vertices of each of the
$2^{n-d}$ $d$-subcubes corresponding to any fixed choice of the 
values of the coordinates not in $S$ get each of the $2^{d-k}$ 
possible colors exactly $2^k$ times.
Therefore, for every fixed choice of $j$ of these colors,
the set $A$ of all vertices with these colors has exactly
$2^k \cdot j=s$ vertices of each of the subcubes 
corresponding to such spanning
sets of columns $S$. The expected fraction of such sets $S$ is
$c(d,k)$ of all ${n \choose d}$ $d$-tuples of columns, and therefore
there exists a choice of $B$ for which there are at least that many
sets $S$. This completes the proof. 
\end{proof}	
	
We remark that for small values of $d$ the inequality 
$\lambda(n,d,s)\geq  c(d, k)$  can be significantly improved to $c^*(d,k)$ defined similarly 
by restricting the random columns of the matrix to nonzero vectors. 
For example, if $d=3$ and $s=2$, then $k=1$ and  the lower bound $c^*(3,1)$ is $8/9$, 
whereas $c(3,1)=21/32$. 
As $d$ tends to infinity these two lower bounds converge to  the same value.

\section{Number theoretic consequences}\label{sec:number}
Recall that a layer in $Q_n$ is a maximal set of vertices with the 
same Hamming weight.
Consider a layered set $A$ of vertices in $Q_n$, that is a set of vertices that contains either all or none of the vertices of each layer. 
Then the number of vertices of $A$ in each copy of $Q_d$ is a sum of binomial coefficients $\binom{d}{i}$ for some values of $i$.  
If all copies of $Q_d$ have the same number, $s$, of vertices from $A$, we have $\lambda(n, d, s, A)=1$ and our results provide some 
simple properties of binomial coefficients.
The proof of the following proposition does not 
involve hypercube statistics, 
we include it here as the argument is direct, short and simple.
Theorem \ref{sum-binom}  is a generalisation of 
Proposition \ref{sum-binom-even} and its proof does use 
hypercube statistics.

\begin{proposition}\label{sum-binom-even}
For integers $k, a, d$, with $0\leq a<k$, $2<k\leq d$, let 
$q(a,k,d)$ be the sum of the 
binomial coefficients $\binom{d}{i}$ over all $i$, $i\equiv a \pmod k$. 
Then, for   any such fixed $d$ and $k$,  
the $k$ numbers $q(a,k,d)$, $0\leq a<k$ are not all equal.
\end{proposition}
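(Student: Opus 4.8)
The plan is to reduce the statement to a divisibility obstruction coming from roots of unity. Suppose for contradiction that the $k$ sums $q(a,k,d)$, $0 \le a < k$, are all equal; since they add up to $\sum_{i} \binom{d}{i} = 2^d$, each one equals $2^d/k$. First I would encode the sums $q(a,k,d)$ via a primitive $k$-th root of unity $\zeta$: the standard discrete Fourier (roots-of-unity filter) identity gives, for each fixed residue $a$,
\[
q(a,k,d) = \frac{1}{k}\sum_{j=0}^{k-1} \zeta^{-aj} (1+\zeta^j)^d .
\]
The hypothesis $q(a,k,d) = 2^d/k$ for all $a$ is then equivalent to the statement that $\sum_{j=0}^{k-1} \zeta^{-aj}(1+\zeta^j)^d = 2^d$ for every $a$, and subtracting the $j=0$ term (which contributes $2^d$) shows this is equivalent to $\sum_{j=1}^{k-1} \zeta^{-aj}(1+\zeta^j)^d = 0$ for all $a \in \{0,\dots,k-1\}$. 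Viewing this as a linear system in the $k-1$ unknowns $(1+\zeta^j)^d$, $1 \le j \le k-1$, whose coefficient matrix is a Vandermonde-type matrix in the distinct values $\zeta^{-j}$, I conclude that $(1+\zeta^j)^d = 0$ for each $j = 1,\dots,k-1$.

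The contradiction is now immediate: $(1+\zeta^j)^d = 0$ forces $\zeta^j = -1$, i.e. $\zeta^j$ is a primitive square root of unity, which can happen for at most one value of $j$ in $\{1,\dots,k-1\}$ (namely $j = k/2$ when $k$ is even). Since $k > 2$, there is at least one index $j \in \{1,\dots,k-1\}$ with $\zeta^j \ne -1$, and for that $j$ we get $(1+\zeta^j)^d \ne 0$, contradicting the conclusion of the previous step. Hence the $q(a,k,d)$ cannot all be equal. (One can alternatively phrase the final step without the Vandermonde argument: evaluate the generating polynomial $\sum_{a} q(a,k,d) y^a$ and observe that if all coefficients were equal it would be divisible by $1 + y + \dots + y^{k-1}$, hence vanish at every nontrivial $k$-th root of unity $\zeta$; but that value is $(1+\zeta)^d$, which is nonzero since $k > 2$ means $\zeta \ne -1$ can be chosen — in fact $\zeta$ a primitive $k$-th root works whenever $k \ne 2$.)

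The only mild subtlety — and the step I expect to require the most care — is the passage from "all $q(a,k,d)$ equal" to "$(1+\zeta^j)^d = 0$ for all relevant $j$", since one must be sure the right set of roots of unity is being used and that the linear-algebra inversion (or the polynomial-divisibility argument) is valid; choosing $\zeta$ to be a \emph{primitive} $k$-th root of unity and noting $k > 2 \Rightarrow \zeta \neq -1$ handles this cleanly, and in fact the cleanest write-up uses only a single primitive $\zeta$ rather than the full system. Everything else is routine manipulation of the binomial theorem, so the proof is genuinely short and self-contained, as the surrounding text promises.
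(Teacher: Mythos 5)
Your proof is correct and follows essentially the same route as the paper's: both use the roots-of-unity filter identity and invert it (you via a Vandermonde system, the paper via nonsingularity of the Fourier matrix acting on the vector of values $(1+\zeta^j)^d$) to force $(1+\zeta^j)^d=0$ for every $j\in\{1,\dots,k-1\}$, which is impossible for $k>2$ since at most one power $\zeta^j$ can equal $-1$. Your parenthetical variant---noting that $\sum_{a}q(a,k,d)\zeta^{a}=(1+\zeta)^d$ must vanish at a single primitive $k$-th root $\zeta\neq-1$ if all the $q(a,k,d)$ were equal---is a nice further shortcut that bypasses the inversion step altogether.
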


\begin{proof}
Let $w$ be a primitive root of unity of order $k$.
Then $$q(a,k,d) = \frac{1}{k} \sum_{i=0}^{k-1} w^{-ia}(1+w^i)^d,$$
see for example \cite{G, BCK}. Let $v$ be the vector of length $k$ with coordinates $(1+w^i)^d$, $i=0, \ldots, k-1$, 
and let $A$ be the $k\times k$  Fourier matrix
$(w^{-ia})_{0\leq i,a<k}$. If all the numbers $q(a, k, d)$ are equal then $Av$ is a multiple of the constant vector.
Since $A$ is a nonsingular matrix and $A$ times the vector $(1, 0, \ldots , 0)$ is the vector $(1, 1, \ldots, 1)$
this implies that $v$ must be a multiple of the vector $(1, 0, \ldots , 0)$. But this is not the case for
$k > 2$ (note that it is the case for $k = 2$). 
\end{proof}

\begin{theorem}\label{sum-binom}
Let d and k be positive integers, and let T be a subset of $\Z_k$ (the
integers modulo k). For each element a in $\Z_k$ define
$q(a)=q(d,k,a,T)=\sum_i \binom{d }{i}$ where $i$ ranges over all numbers between
$0$ and $d$ for which $(i+a) \mod k$ lies in $T$.
If all numbers $q(a)$ are equal then their common value is $0$, $2^{d-1}$, or $2^{d}$.
Moreover, the only possibilities are $T=\Z_k$ or its complement
$T=\emptyset$, or $k$ even and $T$ either all even residues modulo $k$
or its complement, i.e., all odd residues modulo $k$.
\end{theorem}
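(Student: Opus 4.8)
The plan is to translate the statement into hypercube language, invoke Theorem~\ref{ub} to identify the common value, and then pin down $T$ directly. Fix $d,k$ and $T\subseteq\Z_k$, take $n$ large, and let $A\subseteq Q_n$ be the layered set of all vertices whose Hamming weight $w$ satisfies $(w \bmod k)\in T$. A copy $C$ of $Q_d$ in $Q_n$ whose bottom vertex has Hamming weight $m$ has exactly $\binom{d}{i}$ vertices of weight $m+i$ for $0\le i\le d$, so $|C\cap A|=q(m \bmod k)$ in the notation of the theorem. Hence if all the numbers $q(a)$ equal a common value $s$, then every copy of $Q_d$ meets $A$ in exactly $s$ vertices, so $\lambda(n,d,s,A)=1$ for all $n\ge d$ and therefore $\lambda(d,s)=1$; by Theorem~\ref{ub} this forces $s\in\{0,2^{d-1},2^d\}$, proving the first assertion. (One could also extract this value from the roots-of-unity computation below, but the hypercube route is shorter.)

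It remains to identify $T$ in each case. For every $a\in\Z_k$ the $i=0$ term gives $q(a)\ge\binom{d}{0}\mathbf{1}_T(a)=\mathbf{1}_T(a)$, and dually $2^d-q(a)=\sum_{i}\binom{d}{i}\mathbf{1}_{\Z_k\setminus T}(a+i)\ge\mathbf{1}_{\Z_k\setminus T}(a)$. Thus $s=0$ forces $\mathbf{1}_T\equiv0$, i.e.\ $T=\emptyset$, and $s=2^d$ forces $\mathbf{1}_{\Z_k\setminus T}\equiv0$, i.e.\ $T=\Z_k$. The remaining and main case is $s=2^{d-1}$, which I would handle with $k$-th roots of unity exactly as in the proof of Proposition~\ref{sum-binom-even}. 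With $w$ a primitive $k$-th root of unity and $\widehat T(j)=\sum_{r\in T}w^{-jr}$, expanding $\mathbf{1}_T$ in characters yields
$$
q(a)=\frac1k\sum_{j=0}^{k-1}w^{ja}\,\widehat T(j)\,(1+w^j)^d ,
$$
so the numbers $q(a)$ are all equal if and only if $\widehat T(j)(1+w^j)^d=0$ for every $j\neq0$. Since $(1+w^j)^d=0$ precisely when $k$ is even and $j=k/2$, this is equivalent to $\widehat T(j)=0$ for all $j\notin\{0,k/2\}$, so $\mathbf{1}_T$ is a combination of the trivial character and --- only when $k$ is even --- the character $r\mapsto(-1)^r$: $\mathbf{1}_T(r)=|T|/k+\gamma(-1)^r$ for a real constant $\gamma$. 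Being $\{0,1\}$-valued and (in the present case) not identically $0$ or $1$, this forces $k$ even, $|T|=k/2$, and $\gamma=\pm\tfrac12$, that is, $T$ is the set of all even residues or the set of all odd residues modulo $k$. Conversely, these two sets together with $\emptyset$ and $\Z_k$ do make $q(a)$ constant, with common values $2^{d-1}$, $2^{d-1}$, $0$, $2^d$ respectively (using $\sum_{i\text{ even}}\binom{d}{i}=\sum_{i\text{ odd}}\binom{d}{i}=2^{d-1}$).

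The main obstacle is precisely the case $s=2^{d-1}$: Theorem~\ref{ub} pins down the common value but says nothing about the shape of $T$, so some extra input is unavoidable there. The roots-of-unity computation is the shortest such input; a more ``hypercube-flavoured'' alternative is a downward induction on the auxiliary sums $q^{(e)}(a)=\sum_{i=0}^{e}\binom{e}{i}\mathbf{1}_T(a+i)$, which satisfy $q^{(e)}(a)=q^{(e-1)}(a)+q^{(e-1)}(a+1)$ (Pascal's rule, i.e.\ the splitting of an $e$-cube into two parallel $(e-1)$-cubes): constancy of $q^{(e)}$ forces $q^{(e-1)}$ either to be constant or --- only when $k$ is even --- to depend only on the parity of $a$, and the parity-dependent option propagates all the way down to $q^{(0)}=\mathbf{1}_T$ only if $\mathbf{1}_T$ itself alternates, that is, $T$ is the evens or the odds. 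Either way the calculation is routine; the one point to watch is the ``wrap-around the cycle $\Z_k$'' consistency condition, which is exactly what separates $k$ even from $k$ odd.
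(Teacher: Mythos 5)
Your argument is correct, and it splits into one part that matches the paper and one that does not. The first assertion is proved exactly as in the paper: encode $T$ as a layered set $A$, note that every copy of $Q_d$ whose bottom vertex has weight $m$ meets $A$ in $q(m\bmod k)$ vertices, and invoke Theorem \ref{ub} to force $s\in\{0,2^{d-1},2^d\}$ (the paper routes this through a copy of $Q_p$, $p>2^d$ prime, and equation (\ref{star}), but the mechanism is the same). Where you genuinely diverge is the ``moreover'' part. The paper stays inside hypercube statistics: with $p>\max(2^d,k)$ prime, every copy of $Q_p$ contains $2^{p-1}\equiv 1\pmod p$ vertices of $A$, and since $p\mid\binom{p}{i}$ for $0<i<p$, exactly one of the layers $i$ and $i+p$ lies in $A$; hence $a\in T$ iff $a+p\pmod k\notin T$, and a short coset argument (via $\gcd(2p,k)$) leaves only $\emptyset$, $\Z_k$, the evens, or the odds. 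You instead extend the root-of-unity computation of Proposition \ref{sum-binom-even}: constancy of $q$ kills every Fourier coefficient $\widehat T(j)$ except at $j=0$ and, when $k$ is even, $j=k/2$, because $(1+w^j)^d$ vanishes only there; the $\{0,1\}$-valuedness of $\mathbf{1}_T$ then pins down $T$, and as you note this computation alone also recovers the admissible common values, so your proof could dispense with Theorem \ref{ub} entirely. The trade-off: your route is self-contained and purely algebraic, essentially a strengthening of Proposition \ref{sum-binom-even}, whereas the paper's route deliberately showcases the hypercube-statistics theorem (that is the point of Section \ref{sec:number}) and avoids characters. One caveat: the ``Pascal's rule'' alternative you sketch at the end is not as routine as claimed. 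Constancy of $q^{(e)}$ does give $q^{(e-1)}(a)=q^{(e-1)}(a+2)$, but parity-dependence of $q^{(e-1)}$ does not by itself force the analogous statement for $q^{(e-2)}$, so the downward propagation needs an additional idea; since this is only an aside, it does not affect the validity of your main argument.
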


\begin{proof}
Consider a prime $p>max (2^d,k)$, denote the common value of $q(a)$ by $s$,
and choose a subset $A$ of $Q_p$ by including in it exactly all layers
$b$ so that $b \pmod k$ lies in $T$.  Then every subcube $Q_d$ in this $Q_p$ contains
exactly $s$ vertices of $A$, so $Q_p$ contains $s2^{p-d}$ vertices of $A$, and
	Theorem \ref{ub}  (and its proof) imply that $s$ is either $0$ or
$2^d$, or $2^{d-1}$.

To prove the ``moreover" part (which was the reason to take $p>k$,
not only $p>2^d$, we allow $k$ here to be larger than $2^d$),
note that if $s=0$ then $T$ is empty, if $s=2^d$ then $T=\Z_k$, so assume
$s=2^{d-1}$. Consider the set $A$ defined as before but now we take it
in a bigger cube $Q_n$ ($n\gg p$). Every copy of $Q_p$ in $Q_n$ contains now
$s$ times $2^{p-d}=2^{p-1}$ vertices of $A$, which is $1$ modulo $p$. Therefore,
for every $i$,
layer $i$ is contained in $A$ if and only if layer $i+p$ is not contained in $A$
(since exactly one of the two binomial coefficients $\binom{p}{0}=1$
and $\binom{p}{p}=1$ should contribute to the number of vertices in the
cube $Q_p$ that lies in layers $i,i+1,\ldots,i+p$).

This means that if  $a \in \Z_k$ lies in $T$ then $a+p \pmod k$ does not,
and $(a+2p) \pmod  k$ is again in $T$.  For odd $k$,  $2p$ is relatively
prime to $k$, so this will give that $T$ is either empty or $\Z_k$
(which is not the case we are considering). So $k$ is even and
a lies in $T$ iff $a+p$ does not. As $p$ is relatively prime to $k$
this shows that $T$ is either all even or all odd residues
modulo $k$, completing the proof.
\end{proof}

\section{Approximate hypercube statistics}\label{sec:approx}

Consider a positive integer $d$. We know exactly for what values 
of $s=s(d)$, $\lambda(d,s)=1$.  These are $s\in \{0, 2^{d}, 2^{d-1}\}$. 
We say that the real value $x\in (0,1)$ is {\it approximately good} if for 
any sufficiently large $d$ and every $n>d$, there is a subset $A$ 
of vertices in $Q_n$ such that each copy of $Q_d$ contains  
$x2^d(1 +o(1))$ elements of $A$, where the $o(1)$ tends to zero as 
$d$ tends to infinity.

\begin{theorem}
	Any fixed real number $x\in (0,1)$ is approximately good.
\end{theorem}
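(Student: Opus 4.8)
A purely random choice of $A$ (each vertex of $Q_n$ included independently with probability $x$) controls any single $d$-cube by a Chernoff bound, but this cannot be union-bounded over all $2^{n-d}\binom{n}{d}$ cubes when $n$ is unbounded, and the linear ``syndrome'' constructions behind Theorem~\ref{lb} degenerate for huge $n$ (many $d$-subsets of columns span a low-dimensional space, on which one loses control of the count). The plan is instead to use an explicit \emph{layered} construction. Fix $x\in(0,1)$; for a large integer $d$ put $m=m(d)=\lfloor d^{1/4}\rfloor$ and $t=\lfloor xm\rfloor$, and for every $n>d$ let
$$A=A_n=\{\,v\in\F_2^n:\ |v|\bmod m\in\{0,1,\dots,t-1\}\,\},$$
where $|v|$ denotes the Hamming weight of $v$. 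The key feature is that the number of vertices of $A$ in a $d$-cube depends only on $d$, $m$, $t$ and the weight of the cube's fixed coordinates, and not on $n$, so a single estimate handles all $n>d$ simultaneously.

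The next step is the count. A $d$-cube of $Q_n$ is obtained by fixing the coordinates outside some $d$-set to a vector of weight $w$ and letting the remaining $d$ coordinates vary, so it contains exactly $\sum_{k=0}^{d}\binom{d}{k}\,\mathbf{1}[(w+k)\bmod m\in T]$ vertices of $A$; regrouping according to residues mod $m$, this equals $\sum_{a\in T} q(a-w,m,d)$, with $q(\cdot,m,d)$ the binomial sums of Proposition~\ref{sum-binom-even} and all arguments read modulo $m$. By the roots-of-unity expression $q(b,m,d)=\frac1m\sum_{i=0}^{m-1}\omega^{-ib}(1+\omega^i)^d$ used in the proof of Proposition~\ref{sum-binom-even} ($\omega$ a primitive $m$-th root of unity), the $i=0$ term contributes $2^d/m$ while every other term has modulus at most $|1+\omega|^d=(2\cos(\pi/m))^d\le 2^d e^{-\Omega(d/m^2)}$; hence $\bigl|\,q(b,m,d)-2^d/m\,\bigr|\le 2^d e^{-\Omega(d/m^2)}$ uniformly in $b$. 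Summing over the $t$ elements of $T$, every $d$-cube of $Q_n$ contains $\frac{t}{m}\,2^d\bigl(1\pm m\,e^{-\Omega(d/m^2)}\bigr)$ vertices of $A$.

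It remains to see that both errors vanish as $d\to\infty$. Since $m\to\infty$ we have $\tfrac{t}{m}=\tfrac{\lfloor xm\rfloor}{m}=x\,(1+O(1/m))$, and since $m^2\le\sqrt d$ we have $m\,e^{-\Omega(d/m^2)}\le d^{1/4}e^{-\Omega(\sqrt d)}\to 0$, so every $d$-cube of $Q_n$ contains $x2^d(1+o(1))$ vertices of $A$ with the $o(1)$ depending only on $d$ — exactly what is required. The one place that needs genuine thought is the growth rate of $m$: it must grow (so that $t/m\to x$) yet slowly enough that the Fourier tail $(2\cos(\pi/m))^d$ stays negligible next to the main term $t\,2^d/m$; any $m=m(d)$ with $m\to\infty$ and $m^2\log m=o(d)$ does the job, and $m=\lfloor d^{1/4}\rfloor$ is a convenient choice. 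Everything else is a routine computation with the same roots-of-unity filter already used in this paper.
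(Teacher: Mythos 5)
Your proof is correct and follows essentially the same route as the paper's: a layered set determined by the residue of the Hamming weight modulo a slowly growing modulus, with the count in each $d$-cube expressed through the binomial sums $q(\cdot,m,d)$ and controlled by the roots-of-unity filter, whose nonprincipal terms are exponentially small compared with the main term $2^d/m$. The only difference is cosmetic: you fix the modulus explicitly as $m=\lfloor d^{1/4}\rfloor$ and take $T$ to be an interval of $\lfloor xm\rfloor$ residues, whereas the paper phrases the same choice as approximating $x$ by a rational $p/q$ with the denominator growing slowly with $d$.
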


\begin{proof}
	Approximate $x$ by a rational $p/q$ so that $|x-p/q| =o(x)$.
	Note that for any fixed $q$, as $d$ tends to infinity
	$qe^{-d/(10q^2)} =o(x)$. 

Let $A$ be a subset of vertices of the cube $Q_n$ consisting of all 
	layers that modulo $q$ belong to some fixed set $P$ of
$p$ elements of $\mathbb{Z}_q$.
Define $q(a,q ,d)$ to be the sum of the binomial coefficients 
	$\binom{d}{i}$ over all $i$, $i\equiv a \pmod q$, as in 
	the previous section.
Then each copy of $Q_d$ has $ \sum_{y\in P}  q(a+y, q, d)$ elements of 
	$A$, for some integer $a$.

Since $q(a,q ,d) = \frac{1}{q} \sum_{i=0}^{q-1} w^{-ia}(1+w^i)^d,$ 
	for the primitive root of unity $w$ of order $q$,  
	separating the term $i=0$ and using the triangle inequality,  
	we have 
	$$
	|q(a,q, d) - \frac{1}{q} 2^d | \leq  (2- \frac{1}{4q^2})^d 
	\leq 2^d e^{-d/(10q^2)}.
	$$
	Note that 
	the constants $4$  and $10$ here are not optimal and we make
	no attempt to optimize them.

Therefore, for each copy $Q$ of $Q_d$
$$
	||A \cap Q| - \frac{p}{q}2^d| \leq q 2^d e^{-d/(10q^2)}
	\leq o(x2^d).
	$$
	Since by our choice of the approximation $p/q$
	$$|\frac{p}{q}2^d-x2^d| \leq o(x2^d)$$
	the desired result follows.
\end{proof}

\noindent
{\bf Remarks:}
\begin{itemize}
\item
For $x=1/3$ the set $A$ consisting of every third layer of $Q_n$ contains
either $\lfloor 2^d/3 \rfloor$ or $\lceil 2^d/3 \rceil$ points in each copy
of $Q_d$, showing that in this specific case the approximation obtained
		is as strong as possible. Note also that for the unique 
		odd value 
		$s \in \{\lfloor 2^d/3 \rfloor, \lceil 2^d/3 \rceil \}$
		this shows that $\lambda(d,s) \geq 2/3-o(1)$ 
		with the $o(1)$-term tending to $0$ as $d$ tends to
	infinity. This is a better lower bound than the one provided by
		Theorem \ref{lb} for this case.
	\item
As is the case with all the questions here, the behaviour is
very different than the one with the analogous questions about
edge statistics in graphs: trying to maximize the number of 
		induced subgraphs on $d$ vertices in a large graph 
		that span exactly (or approximately) $s$ edges. 
		Here, by Ramsey's theorem, there are 
		always such induced subgraphs that
span either $0$ or $\binom{d}{2}$ edges, so no nontrivial approximation
to $s$ is possible if we want it to hold for all induced subgraphs on $d$
vertices.
\end{itemize}

\section{Concluding remarks}\label{sec:conclusion}
We considered the hypercube statistics problem 
expressed in the numbers $\lambda(d,s)$. 
We proved for a given $d$ that $\lambda(d,s)=1$ iff $s\in \{0, 2^d, 2^{d-1}\}$ and that for other values of $s$,  $\lambda(d,s)$ is at most  $1- \Omega(1/s)$ as $d$ grows.
We also showed that for those $s$ that are divisible by a high 
power of $2$  the lower bound on $\lambda(d,s)$ is close to 
the above upper bound. 
The following question remains open.\\

\noindent
{\bf Question.} What is the infimum of  $\lambda(d,s)$ 
over all admissible values of $d$ and $s$? Is it $c_d(1+o(1))$ for
large $d$? \\

By the probabilistic argument described in the second section, 
we know that $\lambda(d,s)$ is at least $c_d$, which is 
larger than $0.28$ for all $d,s$. 
However, we lack comparable upper bounds.
In particular we suspect that for large $d$,
$\lambda(d,1)=(1+o(1))1/e \approx 0.37 $, where the 
$o(1)$-term tends to $0$ as $d$ tends to infinity, but can only prove
a weaker upper bound arising from a tight bound for $\lambda(d+2,d,1)$. 
By a slightly more careful analysis we can show that for every fixed 
$d$, $\lambda(d,1)$ is strictly less than $3/4$, but this is still 
far from our best lower bound which approached $1/e$ as $d$ grows. \\

Our proof of the general upper bound involved a careful analysis of the $(d+2)$-cubes and averaging. We observe that the upper bound in $(d+2)$-cubes for $s>1$  is achieved by configurations with exactly $4s$ vertices of $A$.  One could then upper bound the fraction of 
$(d+2)$-cubes containing exactly $4s$ elements, and possibly also
iterate the argument. However, 
this approach only gives a modest improvement of the upper bound.\\

We showed that if $\lambda(d,s)=1$ then $s\in \{0, 2^d, 2^{d-1}\}$. 
If $s=0$ there is a unique set $A$ in $Q_n$ such that $\lambda(n, d, s, A)=1$, namely the empty set.
Similarly, for $s=2^d$, the only such set $A$ is the set of all vertices of $Q_n$. 
If $s=2^{d-1}$ and $d=1$, there are two possible sets $A$ such that $\lambda(n, d, s, A)=1$, the one consisting of all vertices of 
even Hamming weight and the one consisting of all vertices of odd 
Hamming weight.
If $s=2^{d-1}$ and $d>1$, there are more than two  such sets. Indeed, 
one can start with the set $A$ consisting of all vertices of even 
Hamming weight which satisfies 
$\lambda(n,d, s, A)=1$. Next, consider a $(n-d+t)$-subcube $Q$, 
for some $t\in [d-1]$,  and replace $A$ with its complement in 
this subcube. Let $B$ be the resulting set of vertices. 
For any $d$-cube $Q'$, $Q\cap Q'$ is a subcube of dimension at least $t$. 
Since any subcube of dimension at least one has exactly half of 
its vertices in $A$, it follows that the number of vertices of $B$ 
in $Q'$ is still exactly $2^{d-1}$. With certain restrictions, 
this process can be repeated to get additional sets $A$ that work.\\
	
When $s=2^d$, it is clear that  $\lambda(d,s)=1$ by taking  all the vertices in a ground hypercube. However, the problem of finding 
the largest possible value of 
$\lambda(n,d, 2^d, A)$  becomes non-trivial if we restrict 
the setting to the case when the size $k$ of $A$ is prescribed.
This problem is a  generalisation of the classical isoperimetric problem originally considered for $d=1$ that counts the largest number of edges induced by $k$ vertices in $Q_n$.  It was solved by Hart \cite{H}, as well as 
by quite a few others.
 Hardstun, Kratochv\'il, Sunde, and Telle \cite{SHKT}, see also Simon \cite{S} and Bollob\'as and Leader \cite{BL},  extended this problem to general $d$ and proved that for $|A|=k$,  
 $\lambda(n, d,  2^d,  A)$ is maximised by the set $A$ of $k$ 
 binary vectors that represent the first $k$ non-negative integers.
\\

\noindent	
{\bf Acknowledgements} The research of the first author is 
partially supported by NSF grant DMS-2154082. 
The research of the second author 
is funded in part by the DFG grant FKZ AX 93/2-1.


\end{document}